\documentclass{article}
\usepackage{arxiv}
\usepackage[utf8]{inputenc} 
\usepackage[T2A]{fontenc}    
\usepackage[english,russian]{babel}
\usepackage{mmap}
\usepackage{hyperref}       
\usepackage{url}            
\usepackage{booktabs}       
\usepackage{amsfonts}       
\usepackage{nicefrac}       
\usepackage{amsmath,amssymb,amsthm}
\usepackage{mathtools}
\usepackage{algorithm, algorithmic}
\floatname{algorithm}{Алгоритм}
\usepackage{graphicx}
\usepackage{stackengine}    
\usepackage[labelsep=period]{caption}
\captionsetup[ruled]{labelsep=period}
\usepackage{hyperref}
\hypersetup{
  colorlinks   = true,    
  urlcolor     = green,   
  linkcolor    = blue,    
  citecolor    = red      
}
\usepackage{comment}
\usepackage{multirow}
\makeatletter
\usepackage{svg}
\setlength{\parindent}{15pt}
\usepackage{titlesec}
\titlelabel{\thetitle. }

\def\keywordname{{\bfseries Ключевые слова:}}%
\def\keywords#1{\par\addvspace\medskipamount{\rightskip=0pt plus1cm
\def\and{\ifhmode\unskip\nobreak\fi\ $\cdot$ }\noindent\keywordname\enspace\ignorespaces#1\par}}
\DeclareMathOperator*{\argmin}{arg\,min}

\theoremstyle{plain}
\newtheorem{theorem}{Теорема}

\newtheorem{definition}{Определение}

\theoremstyle{definition}
\newtheorem{remark}{Замечание}
\newtheorem{example}{Пример}

\renewcommand{\phi}{\varphi}
\renewcommand{\epsilon}{\varepsilon}
\renewcommand{\leq}{\leqslant}
\renewcommand{\geq}{\geqslant}

\newcommand{\He}{\section}

\author{
{\bfseries С.~С.~Аблаев\,$^{\it{a},\,\it{b},\,*}$,  Ф.~С.~Стонякин\,$^{\it{a},\,\it{b},\,**}$, М~С.~Алкуса\,$^{\it{a},\,\it{c},\,***}$, Д.~А.~Пасечнюк\,$^{\it{a},\,\it{d},\,\it{****}}$}
\\ {\itshape $^{\it{a}}$\,Московский физико-технический институт,}
\\ {\slshape 141701,} {\itshape г. Долгопрудный, Институтский пер.,~9}
\\ {\itshape $^{\it{b}}$\,Крымский федеральный университет им. В.\,И.\,Вернадского,}
\\ {\slshape 295007,} {\itshape г.~Симферополь, проспект академика Вернадского,~4}
\\ {\itshape $^{\it{c}}$\,Национальный исследовательский университет <<Высшая школа экономики>>,}
\\ {\slshape 101000,} {\itshape г. Москва, ул. Мясницкая, д.~20}
\\ {\itshape $^{\it{d}}$\,Исследовательский центр доверенного искусственного интеллекта ИСП РАН,}
\\ {\slshape 109004,} {\itshape г.~Москва, ул. Александра Солженицына,~25}
\\ {\itshape *\,e-mail: seydamet.ablaev@yandex.ru, ORCID: 0000-0002-9927-6503}
\\ {\itshape **\,e-mail: fedyor@mail.ru, ORCID: 0000-0002-9250-4438}
\\ {\itshape ***\,e-mail: mohammad.alkousa@phystech.edu, ORCID: 0000-0001-5470-0182}
\\ {\itshape ****\,e-mail: dmivilensky1@gmail.com, ORCID: 0000-0002-1208-1659}}

\title{АДАПТИВНЫЕ МЕТОДЫ ДЛЯ ВАРИАЦИОННЫХ НЕРАВЕНСТВ С ОТНОСИТЕЛЬНО ГЛАДКИМИ И ОТНОСИТЕЛЬНО СИЛЬНО МОНОТОННЫМИ ОПЕРАТОРАМИ}

\date{}

\begin{document}

\maketitle

\begin{abstract}
Статья посвящена некоторым адаптивным методам для вариационных неравенств с относительно гладкими и относительно сильно монотонными операторами. Отталкиваясь от недавно предложенного проксимального варианта экстраградиентного метода для такого класса задач, мы детально исследуем метод с адаптивно подбираемыми значениями параметров. Доказана оценка скорости сходимости этого метода. Результат обобщён на класс вариационных неравенств с относительно сильно монотонными $\delta$-обобщённо гладкими операторами вариационного неравенства. Для задачи гребневой регрессии и вариационного неравенства, связанного с параллелепипедно-симплексными играми, выполнены численные эксперименты, демонстрирующие эффективность предложенной методики адаптивного подбора параметров в ходе реализации алгоритма.
\keywords{вариационное неравенство \and относительно сильно монотонный оператор \and относительно гладкий оператор \and адаптивный метод \and минимизация эмпирического риска \and  параллелепипедно-симплексная игра}
\end{abstract}

\He{ВВЕДЕНИЕ}

Свойства относительной гладкости и относительной сильной монотонности для вариационных неравенств были введены довольно недавно в \cite{Stonyakin} и \cite{Sid} соответственно как аналоги свойств $L$-гладкости и относительной сильной выпуклости для минимизационных задач. В данной работе, отправляясь от неадаптивного варианта проксимального зеркального метода из \cite{Sid}, для относительно гладких относительно сильно монотонных операторов детально исследован адаптивный алгоритм такого типа, анонсированный ранее в трудах конференции <<OPTIMA-2022>> \cite{Titov}. Более того, по аналогии с \cite{Stonyakin} рассмотрены и аналоги такого подхода на классе вариационных неравенств с относительно сильно монотонными и  $\delta$-обобщённо гладкими операторами, получены оценки качества выдаваемого решения, описывающие влияние на него параметра $\delta$.

Будем рассматривать задачу нахождения решения $x_*$ (также называемого слабым решением) вариационного неравенства
\begin{equation}\label{eq:1}
\max_{x \in Q} \langle g(x), x_* - x \rangle \leq 0,
\end{equation}
где $Q$~--- выпуклое замкнутое подмножество $\mathbb{R}^n$,
$g: Q \longrightarrow \mathbb{R}^n$. Предположим, что удовлетворяющее \eqref{eq:1} решение $x_*$ существует.

Всюду далее будем предполагать, что нам доступна некоторая выпуклая (вообще говоря, не сильно выпуклая) дифференцируемая прокс-функция $d$, порождающая расстояние, а также соответствующая ей дивергенция (расхождение) Брэгмана
\begin{equation}\label{Brg_form}
V(y, x) := d(y) - d(x) - \langle \nabla d(x), y - x \rangle.
\end{equation}
В работе \cite{Bauschke} выделен класс гладких задач относительно произвольной дивергенции Брэгмана (порождённой необязательно сильно выпуклыми $d$) и рассмотрен ряд примеров.
Напомним следующий аналог понятия относительной сильной выпуклости функции \cite{Lu_Nesterov_2018} для вариационных неравенств \cite{Sid}.
\begin{definition}\label{DefRelStrongMonot}
Оператор $g$ называется относительно $\mu$-сильно монотонным, где $\mu >0$, если для всяких $x, y \in Q$ верно неравенство
	\begin{equation}\label{eq:3}
	 	\mu V(y, x) + \mu V(x, y) \leq \langle g(y) - g(x), y - x \rangle.
	 \end{equation}
\end{definition}
Поясним на следующем примере, почему относительная сильная монотонность оператора вводится имен\-но согласно  \eqref{eq:3}.
\begin{example}
	Если $f: Q \longrightarrow \mathbb{R}$~--- относительно $\mu$-сильно выпуклая дифференцируемая функция
	\begin{equation}\label{eqrelativestorngconv}
	f(x) - f(y) + \mu V(x, y) \leq \langle \nabla{f(x)}, x - y \rangle \quad   \forall x, y \in Q,
	\end{equation}
	то
	\begin{equation}
	f(y) - f(x) + \mu V(y, x) \leq \langle \nabla{f(y)}, y - x \rangle \quad   \forall x, y \in Q.
	\end{equation}
	После сложения двух последних неравенств получаем $\forall x, y \in Q$
	\begin{align*}
	\mu V(x, y) + \mu V(y, x)\leq \langle \nabla{f(y)} - \nabla{f(x)}, y - x \rangle.
	\end{align*}
	Таким образом, неравенство \eqref{eq:3} верно при $g(x) = \nabla{f(x)}$, где $\nabla{f(x)}$~--- градиент $f$ в точке $x$.
\end{example}

Относительно сильно выпуклые функционалы возникают в самых разных ситуациях \cite{Lu_Nesterov_2018}. Рассмотрение таких задач на ограниченных допустимых множествах естественно приводит к вариационным неравенствам с соответствующим предположением относительной сильной монотонности оператора.

\begin{example}{Параллелепипедно-симплексные игры.} \label{ex:box}
Рассмотрим задачу централизованного решения вариационного неравенства, связанного с параллелепипедно-симплексными играми (box-simplex games) \cite{Sid} следующего вида:
\begin{equation}\label{saddle_box_simplex}
    \min_{y \in [-1, 1]^n} \max_{z \in \triangle_{n}} f(y, z),\\
    f(y, z) := z^\top A y - \langle b, z\rangle + \langle c, y\rangle,
\end{equation}
где $A \in \mathbb{R}^{n \times n}$~--- положительно определённая матрица, а $ z \in \triangle_{n}$, где $\triangle_{n}$ ~--- стандартный единичный симплекс в пространстве $\mathbb{R}^{n}$. Параллелепипедно-симплексные игры обобщают задачу $\ell_\infty$-регрессии с параллелепипедными ограничениями, имеющую следующий вид:
\begin{equation*}
    \min_{y \in [-1, 1]^n} \|A y - b\|_\infty,
\end{equation*}
$\|x\|_{\infty} := \max_k|x_k|$, где $x = (x_1, x_2, \ldots, x_n) \in \mathbb{R}^{n}$.

Приведённая задача эквивалентна задаче \eqref{eq:1} решения вариационного неравенства, если выбрать для $x := (y, z)^\top$
\begin{equation}\label{eq:box-simplex1}
    \hat{g}(x) := (A^\top z + c, \; b - A y )^\top.
\end{equation}
Оператор  $\hat{g}$ монотонен, поскольку исходная седловая задача \eqref{saddle_box_simplex} является выпуклой по $y$ (при любом $z$) и вогнутой по $z$ (при любом $y$).

Рассмотрим оператор $g$, его относительно сильно монотонное приближение вида
\begin{equation}\label{eq:box-simplex}
\begin{split}
    g(x) := (A^Tz + c + \mu_y \nabla_y d(y,z), \\
     b - Ay + \mu_z\nabla_z d(y,z))^\top,
\end{split}
\end{equation}
где $\mu_y, \mu_z >0$~--- некоторые фиксированные параметры с достаточно малыми положительными значениями, а согласно
\cite{Sid} прокс-функция в \eqref{Brg_form} может быть выбрана следующим образом:
\begin{align*}
    d(x) &= z^\top |A| (y)^2 + 10 \|A\|_\infty \sum_{i=1}^n z_i \ln z_i,
\end{align*}
где $|\cdot|$ и $(\cdot)^2$ действуют покомпонентно, $\|A\|_{\infty} := \sup_{\|x\|_\infty = 1} \|A x\|_\infty$.

Ясно, что введённый в \eqref{eq:box-simplex} оператор в полученной задаче решения вариационного неравенства~--- $\min\{\mu_y, \mu_z\}$-сильно монотонный относительно выбранной указанным выше способом прокс-функции $d$. Поэтому к этой задаче можно непосредственно применить алгоритм~\ref{alg1}, что и будет сделано далее в разделе экспериментов.
\end{example}

В качестве ещё одного примера рассмотрим задачу централизованной распределённой минимизации эмпирического риска в предположении схожести слагаемых \cite{Hendr}.

\begin{example}{Минимизация эмпирического риска.}\label{min_risk}\\
Рассмотрим задачу централизованной распределённой минимизации эмпирического риска в предположении схожести слагаемых \cite{Hendr},

\begin{align}\label{EmpirProbl}
\nonumber F(x) &:= \frac{1}{m} \sum_{j=1}^{m} f_{j}(x) = \frac{1}{n m} \sum_{j=1}^{m} \sum_{i=1}^{n} \ell(x, z_{i}^{(j)})\\
&=\frac{1}{N}\sum\limits_{i=1}^N \ell(x,z_i) \longrightarrow\min\limits_{x\in Q}
\end{align}
в предположении, что исходные данные представляют из себя набор из $m$ выборок размера $n$, каждая из которых хранится на одном из $m$ серверов. При этом для достаточно большого $n$ все $f_j$ есть $\mu$-сильно выпуклые и $L$-гладкие (удовлетворяют условию Липшица градиента с константой $L > 0$) функционалы, которые можно считать статистически схожими \cite{Hendr}. Такая схожесть может быть описана в виде предположения \cite{Hendr} о том, что для всякого $x$
$$
\|\nabla^2 F(x) - \nabla^2 f_j (x)\|_2 \leq \gamma
$$
при всяком $j$ для некоторого достаточно малого $\gamma >0$. Здесь и далее $\|A\|_2 := \max\limits_{\|x\|_2 \leq 1}\|Ax\|_2 $ для евклидовой нормы $\|\cdot\|_2$. При этом предполагается, что существует центральный сервер (ему соответствует функционал $\overline{f}$), на который передаётся информация о градиентах $f_j$ в текущей точке, но не передаётся информация о значениях $f_j$. В \cite{Hendr} показано, что при таком допущении можно ввести прокс-функцию
\begin{equation}\label{prox_risk}
d(x):= \overline{f}(x) + \frac{\gamma}{2}\|x\|_2^2,
\end{equation}
и для соответствующей дивергенции Брэгмана $V(y, x) = d(y) - d(x) - \langle \nabla d(x), y -x \rangle$ функция $F$ будет относительно $1$-гладкой и относительно $\frac{\mu}{\mu + 2\gamma}$-сильно выпуклой, т.~е. для всяких $x, y \in Q$ верны неравенства
\begin{gather*}
    F(y) \leq F(x) + \langle \nabla F(x), y - x\rangle + V(y, x),\\
    F(y) \geq F(x) + \langle \nabla F(x), y - x\rangle + \frac{\mu}{\mu + 2\gamma} V(y, x).
\end{gather*}
	
Это означает, что при $\gamma \ll L$ для задач минимизации эмпирического риска (целевой функционал $L$-гладкий и $\mu$-сильно выпуклый) можно улучшить оценку $$O\left(\frac{L}{\mu}\log\frac{1}{\varepsilon}\right)$$
сложности неускоренного градиентного метода до $$O\left(\left(1+\frac{\gamma}{\mu}\right)\log\frac{1}{\varepsilon}\right),$$
используя, по-прежнему, только информацию первого порядка.
	
Отметим, что при сопоставимых значениях параметров $\gamma$ и $\mu$ (этого ввиду малости $\gamma$ возможно добиться, к примеру, регуляризацией задачи) такая оценка близка к следующей оценке сложности ускоренных методов:
$$O\left(\sqrt{1+\frac{\gamma}{\mu}}\log\frac{1}{\varepsilon}\right).$$
При этом на каждой итерации метода, выполняемой центральным узлом, доступна информация о градиенте целевого функционала $F$, но не доступна информация о значении функционала $F$. Доступность информации о градиенте позволяет рассматривать поставленную задачу минимизации эмпирического риска как задачу отыскания решения вариационного неравенства с относительно гладким и относительно сильно монотонным оператором $g = \nabla F$. Такой подход позволяет, в частности, предложить метод с полной адаптивной настройкой на параметр гладкости задачи \eqref{EmpirProbl}, чего не удалось добиться для метода в \cite{Hendr}.
\end{example}

Статья состоит из введения, заключения и трёх основных разделов. В разделе 2 рассматривается адаптивный  алгоритм \ref{alg1} для вариационных неравенств  с относительно гладким и относительно сильно монотонным оператором. Показано, что увеличение количества отрешиваний вспомогательных подзадач, вызванное адаптивностью, некритично влияет на количество итераций. Раздел 3 содержит обобщение результатов раздела 2 на случай, когда оператор вариационного неравенства удовлетворяет условию $\delta$-обобщенной гладкости \cite{Stonyakin}. В разделе 4 приведены численные эксперименты, демонстрирующие работоспособность адаптивного подбора параметров для алгоритма \ref{alg1}.

\He{АДАПТИВНЫЙ МЕТОД ДЛЯ ВАРИАЦИОННЫХ НЕРАВЕНСТВ С ОТНОСИТЕЛЬНО ГЛАДКИМ СИЛЬНО МОНОТОННЫМ ОПЕРАТОРОМ}\label{sec:2}

В этом разделе, отталкиваясь от недавно предложенного в работе \cite{Sid} метода, мы рассмотрим анонсированный в \cite{Titov} алгоритм~\ref{alg1} без использования техники рестартов в случае относительно гладкого сильно монотонного оператора. Напомним, что  оператор $g$ называется $L$-гладким, если при любых $x, y, z \in Q$ справедливо следующее неравенство
\begin{equation}\label{rel_smooth}
\langle g(y)-g(z),x-z\rangle \leq LV(x,z) + LV(z,y),
\end{equation}
где $V(\cdot, \cdot)$~--- это дивергенция Брэгмана.

\begin{algorithm}[ht!]
\caption{Адаптивный метод первого порядка для вариационных неравенств с относительно сильно монотонными операторами.}
\label{alg1}
\begin{algorithmic}[1]
\REQUIRE $ x_0 \in Q, L_0 > 0, \mu >0, d(\cdot), V(\cdot, \cdot).$
\STATE $z_0 = \argmin_{u\in Q} d(u).$
\FOR{$k \geq  0 $}
\STATE Найти наименьшее целое $i_k \geq  0$ такое, что
\begin{equation}\label{eq_alg_1}
\begin{gathered}
\langle g(z_k) - g(w_k), z_{k+1} - w_k \rangle \leq \\
\leq L_{k+1} \left (V(w_k, z_k) + V(z_{k+1}, w_k) \right ),
\end{gathered}
\end{equation}
где $L_{k+1} = 2^{i_k - 1} L_k$, и
\begin{equation}\label{wk_alg1}
w_k = \argmin\limits_{y\in Q} \left \{ \left \langle \frac{1}{L_{k+1}} g(z_k), y \right \rangle  + V(y, z_k)\right \},
\end{equation}
\begin{equation}\label{zk1_alg1}
\begin{gathered}
z_{k+1} = \argmin\limits_{z \in Q} \bigg \{ \left \langle \frac{1}{L_{k+1}} g(w_k), z \right \rangle  +\\
+ V(z, z_k) + \frac{\mu}{L_{k+1}} V(z, w_k) \bigg \}.
\end{gathered}
\end{equation}
\ENDFOR
\ENSURE $z_k$.
\end{algorithmic}
\end{algorithm}
\begin{theorem}
Пусть $g$~--- $\mu$-относительно сильно монотонный оператор и $z_*$~--- точное решение вариационного неравенства \eqref{eq:1}. Тогда для алгоритма \ref{alg1} верны следующие неравенства:
\begin{equation}\label{estim_alg3}
V(z_*, z_{k+1}) \leq \prod_{i=0}^k\left ( 1 + \frac{\mu}{L_{i+1}} \right )^{-1}  V(z_*, z_0).
\end{equation}
Если, кроме этого, $L_0 \leq 2L$, то имеет место неравенство
\begin{equation}\label{eq1}
V(z_*, z_{k+1}) \leq \left ( 1 + \frac{\mu}{2 L} \right )^{-(k+1)} V(z_*, z_0).
\end{equation}
\end{theorem}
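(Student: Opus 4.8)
Доказательство я бы провёл по стандартной схеме анализа проксимальных зеркальных (экстраградиентных) методов, адаптированной к относительной постановке. Сначала я бы выписал неравенства первого порядка для двух вспомогательных подзадач \eqref{wk_alg1} и \eqref{zk1_alg1}. Из условия оптимальности для \eqref{wk_alg1} и тождества трёх точек для дивергенции Брэгмана получается, что для всякого $u\in Q$
\begin{equation*}
\tfrac{1}{L_{k+1}}\langle g(z_k),\,w_k-u\rangle \le V(u,z_k)-V(u,w_k)-V(w_k,z_k),
\end{equation*}
а из условия оптимальности для \eqref{zk1_alg1}, целевая функция которого содержит составную прокс-часть $V(z,z_k)+\tfrac{\mu}{L_{k+1}}V(z,w_k)$, --- что
\begin{multline*}
\tfrac{1}{L_{k+1}}\langle g(w_k),\,z_{k+1}-u\rangle \le V(u,z_k)-\Big(1+\tfrac{\mu}{L_{k+1}}\Big)V(u,z_{k+1})\\
{}-V(z_{k+1},z_k)+\tfrac{\mu}{L_{k+1}}\big(V(u,w_k)-V(z_{k+1},w_k)\big).
\end{multline*}
Именно вторая оценка --- благодаря дополнительному слагаемому $\tfrac{\mu}{L_{k+1}}V(z,w_k)$ в подзадаче --- и порождает множитель сжатия $\big(1+\tfrac{\mu}{L_{k+1}}\big)^{-1}$.

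Далее я бы разложил нужное скалярное произведение по экстраградиентной схеме,
\begin{equation*}
\langle g(w_k),\,w_k-z_*\rangle=\langle g(w_k)-g(z_k),\,w_k-z_{k+1}\rangle+\langle g(z_k),\,w_k-z_{k+1}\rangle+\langle g(w_k),\,z_{k+1}-z_*\rangle,
\end{equation*}
разделил на $L_{k+1}$ и оценил второе слагаемое первым из выписанных выше неравенств (при $u=z_{k+1}$), третье --- вторым (при $u=z_*$), а первое --- адаптивным критерием \eqref{eq_alg_1}, равносильным неравенству $\tfrac{1}{L_{k+1}}\langle g(w_k)-g(z_k),\,w_k-z_{k+1}\rangle\le V(w_k,z_k)+V(z_{k+1},w_k)$. Основной расчётный момент --- проследить, что после сложения все промежуточные дивергенции $V(z_{k+1},z_k)$, $V(w_k,z_k)$, $V(z_{k+1},w_k)$ взаимно сокращаются и остаётся
\begin{equation*}
\langle g(w_k),\,w_k-z_*\rangle\le L_{k+1}V(z_*,z_k)-(L_{k+1}+\mu)V(z_*,z_{k+1})+\mu V(z_*,w_k)-\mu V(z_{k+1},w_k).
\end{equation*}

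Самое тонкое место --- избавиться от слагаемого $\mu V(z_*,w_k)$; именно здесь используется относительная сильная монотонность. Полагая в \eqref{eq:3} $y=w_k$, $x=z_*$, получаем $\mu V(z_*,w_k)+\mu V(w_k,z_*)\le\langle g(w_k)-g(z_*),\,w_k-z_*\rangle$. Так как относительно сильно монотонный оператор монотонен (дивергенции Брэгмана неотрицательны), а для монотонного оператора всякое слабое решение является и сильным, имеем $\langle g(z_*),\,w_k-z_*\rangle\ge 0$; отбросив ещё и неотрицательное $\mu V(w_k,z_*)$, находим $\mu V(z_*,w_k)\le\langle g(w_k),\,w_k-z_*\rangle$. Подставив это в предыдущее неравенство и отбросив $-\mu V(z_{k+1},w_k)\le 0$, приходим к $(L_{k+1}+\mu)V(z_*,z_{k+1})\le L_{k+1}V(z_*,z_k)$, то есть $V(z_*,z_{k+1})\le\big(1+\tfrac{\mu}{L_{k+1}}\big)^{-1}V(z_*,z_k)$; перемножив эти неравенства по $i=0,\dots,k$, получаем \eqref{estim_alg3}.

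Наконец, для \eqref{eq1} остаётся показать, что $L_{k+1}\le 2L$ при всех $k$. Здесь я бы применил $L$-гладкость \eqref{rel_smooth}: при подстановке $y=z_k$, $z=w_k$, $x=z_{k+1}$ она в точности совпадает с критерием \eqref{eq_alg_1}, в котором вместо $L_{k+1}$ стоит $L$, причём это верно при любых $w_k,z_{k+1}$, построенных по \eqref{wk_alg1}--\eqref{zk1_alg1} при любом пробном значении параметра. Значит, критерий \eqref{eq_alg_1} заведомо выполняется, как только пробное значение достигает $L$; поэтому удвоение, стартующее с $L_k/2$, останавливается на значении $L_{k+1}$, для которого либо $L_{k+1}=L_k/2\le L$ (если $L_k\le 2L$), либо вдвое меньшее предыдущее пробное значение не подошло, откуда $\tfrac{L_{k+1}}{2}<L$. Индукцией по $k$ с базой $L_0\le 2L$ получаем $L_{k+1}\le 2L$ для всех $k$, так что $1+\tfrac{\mu}{L_{k+1}}\ge 1+\tfrac{\mu}{2L}$, и \eqref{estim_alg3} влечёт \eqref{eq1}. Главным препятствием я считаю аккуратное проведение сокращений во втором абзаце и корректное применение относительной сильной монотонности (вместе с тем, что $z_*$ --- сильное решение) в третьем.
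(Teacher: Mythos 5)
Ваше доказательство верно и по существу совпадает с доказательством из статьи: те же два неравенства оптимальности для подзадач \eqref{wk_alg1} и \eqref{zk1_alg1} (в статье они берутся сразу при $u=z_{k+1}$ и $u=z_*$), то же сложение с использованием критерия \eqref{eq_alg_1} и относительной сильной монотонности, дающее множитель сжатия $\left(1+\frac{\mu}{L_{k+1}}\right)^{-1}$, и та же рекурсия; индукция $L_{k+1}\leq 2L$ у вас проведена даже подробнее, чем в статье, где она фактически вынесена в замечание~\ref{rem1}. Единственная неточность~--- ссылка на то, что для монотонного оператора всякое слабое решение является сильным: этот переход требует ещё (геми)непрерывности $g$ (лемма Минти), а не одной монотонности, хотя сама статья данный шаг (неотрицательность $\langle g(w_k), w_k - z_*\rangle - \mu V(z_*,w_k)$) вообще не комментирует.
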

\begin{proof}
Для $w_k$ и $z_{k+1}, \; \forall k \geq 0$, в \eqref{wk_alg1} и \eqref{zk1_alg1} имеем
\begin{equation}\label{e1}
\begin{gathered}
\frac{1}{L_{k+1}} \langle g(z_k), w_k - z_{k+1}\rangle \leq V(z_{k+1}, z_k) - \\
- V(z_{k+1}, w_k)- V(w_k, z_k)
\end{gathered}
\end{equation}
и
\begin{equation}\label{e2}
\begin{gathered}
\frac{1}{L_{k+1}} \langle g(w_k), z_{k+1} - z_* \rangle \leq  V(z_*, z_k) -\\
- V(z_*, z_{k+1}) - V(z_{k+1}, z_k) + \\
+ \frac{\mu}{L_{k+1}} (V(z_*, w_k) - V(z_*, z_{k+1})).
\end{gathered}
\end{equation}
После сложения \eqref{e1} и \eqref{e2} получим
\begin{equation*}
    \begin{aligned}
        & \frac{1}{L_{k+1}} \Big(\langle g(z_k), w_k - z_{k+1}\rangle + \langle g(w_k), z_{k+1} - z_* \rangle \Big)
        \\& \leq - ( V(w_k, z_k)+  V(z_{k+1}, w_k) ) + V(z_*, z_k) -
        \\& - V(z_*, z_{k+1}) + \frac{\mu}{L_{k+1}} (V(z_*, w_k) - V(z_*, z_{k+1})).
    \end{aligned}
\end{equation*}
Используя неравенство \eqref{eq_alg_1}, имеем
\begin{equation*}
\begin{gathered}
\langle g(z_k), z_{k+1} - z_k \rangle \leq \langle g(w_k), z_{k+1} - w_k \rangle +\\
+\langle g(z_k), w_k - z_k \rangle + L_{k+1} \big ( V(w_k, z_k) + \\
+ V(z_{k+1}, w_k) \big ),
\end{gathered}
\end{equation*}
следовательно
\begin{equation*}
\begin{gathered}
\frac{1}{L_{k+1}} \Big( \langle g(w_k), w_k - z_* \rangle - \mu V(z_*, w_k)\Big ) \leq \\
\leq V(z_*, z_k) - \left ( 1 + \frac{\mu}{L_{k+1}} \right ) V(z_*, z_{k+1}).
\end{gathered}
\end{equation*}
Таким образом,
$$
    V(z_*, z_{k}) \geq  \left ( 1 + \frac{\mu}{L_{k+1}} \right ) V(z_*, z_{k+1}),
$$
или
$$
    V(z_*, z_{k+1}) \leq \left ( 1 + \frac{\mu}{L_{k+1}} \right )^{-1} V(z_*, z_{k}).
$$

Далее, по рекурсии
$$V(z_*, z_{k+1}) \leq \prod_{i=0}^k\left ( 1 + \frac{\mu}{L_{i+1}} \right )^{-1}  V(z_*, z_0).$$
Учитывая, что $L_i \leq 2L,\;\forall i$, получаем
$$V(z_*, z_{k+1}) \leq \left ( 1 + \frac{\mu}{2 L} \right )^{-(k+1)} V(z_*, z_0).$$
\end{proof}

\begin{remark}\label{rem1}
Предложенный подход позволяет настраиваться на подходящие локальные параметры $L_{k+1}$ ($k\geq 0$) на итерациях алгоритма~\ref{alg1}. Однако при этом возможно увеличение числа обращений к вспомогательной подпрограмме п.~3 листинга. Тем не менее можно показать, что такое увеличение некритично. Действительно, ясно, что при условии $L_0 \leq 2L$ имеем $L_{k+1}\leq 2L$.  Тогда $L_{k+1}=2^{i_k - 2}L_k$ (где $i_k$~--- количество вспомогательных шагов п.~3 листинга алгоритма~\ref{alg1} на $k$-й итерации). Далее, мы получаем $\frac{L_{k+1}}{L_k}=2^{i_k - 2}$, откуда
\begin{gather}
\frac{L_N}{L_0}=\frac{L_N}{L_{N-1}} \frac{L_{N-1}}{L_{N-2}} \dotsi \frac{L_1}{L_0}=2^{i_0 + i_1 + \ldots + i_{N-1}-2N}.
\end{gather}
С другой стороны,
\begin{gather}
\frac{2L}{L_0}\geq\frac{L_N}{L_0}=2^{i_0 + i_1 + \ldots + i_{N-1}-2N},
\end{gather}
откуда получаем
\begin{gather}
i_0 + i_1 + \ldots + i_{N-1}\leq 2N + \log_2 \frac{2L}{L_0}.
\end{gather}
Это означает, что среднее количество обращений к п.~3 листинга алгоритма~\ref{alg1} сопоставимо с общим количеством итераций $N$.
Если вместо $L_0 \leq 2L$ можно допустить лишь $L_0 \leq C L$ для некоторой $C > 0$, то это приведёт лишь к изменению констант в рассуждениях выше, а принципиальный вывод сохранится.
\end{remark}

Для примера~\ref{min_risk} замечание~\ref{rem1} означает, что адаптивность не может привести к резкому росту числа коммуникаций центрального узла сети с периферийными.

\He{ОБОБЩЕНИЕ РЕЗУЛЬТАТА НА ЗАДАЧИ С $\delta$-ОБОБЩЁННЫМ УСЛОВИЕМ ГЛАДКОСТИ ОПЕРАТОРА ВАРИАЦИОННОГО НЕРАВЕНСТВА}\label{sec:3}

В этом разделе мы детально обоснуем анонсированные в \cite{Titov} теоретические результаты для двух вариантов алгоритма \ref{alg1} в случае, когда оператор $g$ удовлетворяет следующему условию обобщённой гладкости:
\begin{equation}\label{eq3}
    \langle g(y)-g(z), x-z\rangle \leq LV(x,z) + LV(z,y) +\delta
    \end{equation}
для некоторого фиксированного $\delta > 0$ при произвольных $x, y, z \in Q$. Параметр $\delta$ указывает на возможность применения рассматриваемых подходов на классе задач с обобщённым свойством относительной гладкости для вариационного неравенства. Например, такое обобщение позволяет применять рассматриваемую в этой статье методику к вариационным неравенствам с некоторыми типами нелипшицевых операторов и седловым задачи с некоторыми типами или обобщённо гладких функционалов Более подробно мотивирующие примеры приведены в статье \cite{Stonyakin} и докладе на конференции \cite{Titov}.

Рассмотрим следующий алгоритм \ref{alg2} и докажем для него следующий результат, анонсированный в \cite{Titov}.

\begin{algorithm}[htb]
\caption{Адаптивный метод для вариационных неравенств с $\delta$-обобщённо гладкими и относительно сильно монотонными операторами.}
\label{alg2}
\begin{algorithmic}[1]
\REQUIRE $x_0 \in Q,\;\delta >0,\;L_0 > 0,\;\mu >0,\;d(\cdot),\;V(\cdot, \cdot).$
\STATE $z_0 = \argmin_{u\in Q} d(u).$
\FOR{$k \geqslant  0$}
\STATE Найти наименьшее целое $i_k \geq 0$ такое, что
\begin{equation}\label{eq_alg_2}
\begin{gathered}
\langle g(z_k) - g(w_k), z_{k+1} - w_k \rangle \leq
\\
\leq L_{k+1} \left (V(w_k, z_k) + V(z_{k+1}, w_k) \right ) + \delta,
\end{gathered}
\end{equation}
где $L_{k+1} = 2^{i_k - 1} L_k$, и
\begin{equation}\label{wk_alg2}
    w_k = \argmin\limits_{y \in Q} \left \{ \left \langle \frac{1}{L_{k+1}} g(z_k), y \right \rangle  + V(y, z_k)\right \},
\end{equation}
\begin{equation}\label{zk1_alg2}
\begin{gathered}
    z_{k+1} = \argmin\limits_{z \in Q} \bigg \{ \left \langle \frac{1}{L_{k+1}} g(w_k), z \right \rangle + V(z, z_k) \\
    + \frac{\mu}{L_{k+1}} V(z, w_k) \bigg \}.
    \end{gathered}
\end{equation}
\ENDFOR
\ENSURE $z_k$.
\end{algorithmic}
\end{algorithm}

\begin{theorem}\label{th4}
Пусть $g$~--- $\mu$-относительно сильно монотонный оператор и $z_*$~--- точное решение вариационного неравенства \eqref{eq:1}. Тогда для алгоритма~\ref{alg2} справедливо следующее неравенство:
\begin{equation}\label{estim_alg4}
\begin{gathered}
V(z_*, z_{k+1}) \leq  \prod_{i = 1}^{k+1} \left ( 1 + \frac{\mu}{L_{i}} \right )^{-1}  V(z_*, z_0) +\\
+  \frac{\delta}{L_{k+1} + \mu} + \sum_{j = 1}^k \frac{\delta}{L_j + \mu} \prod_{i = j+1}^{k+1} \left ( 1 + \frac{\mu}{L_i} \right )^{-1}.
\end{gathered}
\end{equation}
\end{theorem}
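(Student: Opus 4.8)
The plan is to mirror the proof of the first theorem, propagating the additive error $\delta$ through each step and then resolving the resulting inhomogeneous linear recursion for $V(z_*,z_k)$. The key preliminary observation is that the two ``three-point'' inequalities \eqref{e1} and \eqref{e2} do not use any smoothness hypothesis: they follow only from the optimality conditions for the prox-subproblems \eqref{wk_alg2}, \eqref{zk1_alg2} (which are identical to \eqref{wk_alg1}, \eqref{zk1_alg1}) together with the defining identity \eqref{Brg_form}. So I would first write them down verbatim and add them, obtaining, exactly as before, an upper bound for $\tfrac{1}{L_{k+1}}\bigl(\langle g(z_k),w_k-z_{k+1}\rangle+\langle g(w_k),z_{k+1}-z_*\rangle\bigr)$ in terms of $V(z_*,z_k)$, $V(z_*,z_{k+1})$, $V(z_*,w_k)$ and the two ``small'' divergences $V(w_k,z_k)$, $V(z_{k+1},w_k)$.

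The generalized smoothness enters at exactly one place. Rewriting the left-hand side above as $-\langle g(z_k)-g(w_k),z_{k+1}-w_k\rangle+\langle g(w_k),w_k-z_*\rangle$ and applying the adaptive test \eqref{eq_alg_2} — which differs from \eqref{eq_alg_1} only by the extra summand $\delta$ — the divergences $V(w_k,z_k)$ and $V(z_{k+1},w_k)$ cancel as in the $\delta=0$ case, but an extra $\tfrac{\delta}{L_{k+1}}$ now survives on the right. Using $\mu$-relative strong monotonicity \eqref{eq:3} (with $y=w_k$, $x=z_*$) and the fact that $z_*$ solves \eqref{eq:1}, the quantity $\langle g(w_k),w_k-z_*\rangle-\mu V(z_*,w_k)$ is nonnegative and is dropped from the left, exactly as in the proof of the first theorem. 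What remains is the perturbed one-step inequality
\begin{equation*}
\Bigl(1+\tfrac{\mu}{L_{k+1}}\Bigr)V(z_*,z_{k+1})\leq V(z_*,z_k)+\frac{\delta}{L_{k+1}},
\end{equation*}
i.e. $V(z_*,z_{k+1})\leq\bigl(1+\tfrac{\mu}{L_{k+1}}\bigr)^{-1}V(z_*,z_k)+\dfrac{\delta}{L_{k+1}+\mu}$, where the last term comes from $\bigl(1+\tfrac{\mu}{L_{k+1}}\bigr)^{-1}\tfrac{\delta}{L_{k+1}}=\tfrac{\delta}{L_{k+1}+\mu}$.

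Finally I would unroll this recursion by induction on $k$. Setting $q_i:=\bigl(1+\tfrac{\mu}{L_i}\bigr)^{-1}$ and $b_i:=\tfrac{\delta}{L_i+\mu}$, the inequality reads $V(z_*,z_i)\leq q_iV(z_*,z_{i-1})+b_i$ for $i\geq 1$, and iterating from $i=k+1$ down to $i=1$ yields $V(z_*,z_{k+1})\leq\bigl(\prod_{i=1}^{k+1}q_i\bigr)V(z_*,z_0)+\sum_{j=1}^{k+1}b_j\prod_{i=j+1}^{k+1}q_i$; separating the term $j=k+1$ (whose product over $i=k+2,\dots,k+1$ is empty, hence equals $1$) gives precisely \eqref{estim_alg4}. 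The only points that need genuine care are the cancellation of the two auxiliary divergences after applying \eqref{eq_alg_2} and the bookkeeping of the products in the unrolled sum; there is no essential new difficulty relative to the $\delta=0$ theorem, and in particular no assumption such as $L_0\leq 2L$ is required, since the estimate is stated directly in terms of the realized values $L_i$.
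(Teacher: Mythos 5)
Your proposal is correct and follows essentially the same route as the paper: the same two prox-optimality (three-point) inequalities, the same use of the adaptive criterion \eqref{eq_alg_2} producing the extra $\delta/L_{k+1}$, the same dropping of the nonnegative term $\langle g(w_k),w_k-z_*\rangle-\mu V(z_*,w_k)$, and the same one-step recursion $V(z_*,z_{k+1})\leq\bigl(1+\tfrac{\mu}{L_{k+1}}\bigr)^{-1}V(z_*,z_k)+\tfrac{\delta}{L_{k+1}+\mu}$ unrolled into \eqref{estim_alg4}. No discrepancies worth noting.
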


\begin{proof}
Для $w_k$ и $z_{k+1}$ при всяких $ k \geq 0$ из \eqref{wk_alg2} и \eqref{zk1_alg2} получаем, что
\begin{equation}\label{i1}
\begin{gathered}
\frac{1}{L_{k+1}} \langle g(z_k), w_k - z_{k+1}\rangle \leq V(z_{k+1}, z_k) \\
- V(z_{k+1}, w_k)- V(w_k, z_k)
\end{gathered}
\end{equation}
и
\begin{equation}\label{i2}
\begin{gathered}
\frac{1}{L_{k+1}} \langle g(w_k), z_{k+1} - z_* \rangle \leq V(z_*, z_k) - V(z_*, z_{k+1}) - \\
- V(z_{k+1}, z_k) +\frac{\mu}{L_{k+1}} (V(z_*, w_k) - V(z_*, z_{k+1})).
\end{gathered}
\end{equation}
После сложения \eqref{i1} и \eqref{i2} получим
\begin{equation*}
\begin{aligned}
 &  \frac{1}{L_{k+1}}  \Big(\langle g(z_k), w_k - z_{k+1}\rangle + \langle g(w_k), z_{k+1} - z_* \rangle \Big) \leq
 \\& \leq  - ( V(w_k, z_k)+  V(z_{k+1}, w_k) ) + V(z_*, z_k)  -
 \\& -  V(z_*, z_{k+1}) +\frac{\mu}{L_{k+1}} (V(z_*, w_k) - V(z_*, z_{k+1})).
\end{aligned}
\end{equation*}
Используя неравенство \eqref{eq_alg_2}, получим
\begin{equation*}
\begin{aligned}
     \langle g(z_k),& z_{k+1} - z_k \rangle \leq \langle g(w_k), z_{k+1} - w_k \rangle +
    \\&
    + \langle g(z_k), w_k - z_k \rangle + L_{k+1} \big ( V(w_k, z_k)
    \\&
+ V(z_{k+1}, w_k) \big ) + \delta,
\end{aligned}
\end{equation*}
следовательно
\begin{equation*}
\begin{gathered}
\frac{1}{L_{k+1}} \Big( \langle g(w_k), w_k - z_* \rangle - \mu V(z_*, w_k) - \delta \Big ) \leq \\
\leq V(z_*, z_k) - \left ( 1 + \frac{\mu}{L_{k+1}} \right ) V(z_*, z_{k+1}).
\end{gathered}
\end{equation*}
Таким образом,
$$
    V(z_*, z_{k}) \geq  \left ( 1 + \frac{\mu}{L_{k+1}} \right ) V(z_*, z_{k+1}) - \frac{\delta}{L_{k+1}},
$$
или
$$
    V(z_*, z_{k+1}) \leq \left ( 1 + \frac{\mu}{L_{k+1}} \right )^{-1} V(z_*, z_{k}) + \frac{\delta}{L_{k+1} + \mu}.
$$
Далее, применяя рекурсию по $k$ в последнем неравенстве, приходим к требуемому неравенству
$$
    V(z_*, z_{k+1}) \leq \left ( 1 + \frac{\mu}{L_{k+1}} \right )^{-1} V(z_*, z_{k}) + \frac{\delta}{L_{k+1} + \mu} \leq
$$
$$
    \leq \ldots \leq \prod_{i = 1}^{k+1} \left ( 1 + \frac{\mu}{L_{i}} \right )^{-1} V(z_*, z_0) + \frac{\delta}{L_{k+1} + \mu} +
$$
$$
  +  \sum_{j = 1}^k \frac{\delta}{L_j + \mu} \prod_{i = j+1}^{k+1} \left ( 1 + \frac{\mu}{L_i} \right )^{-1}.
$$
\end{proof}

Рассмотрим ещё одну вариацию алгоритма \ref{alg1} для вариационных неравенств с $\delta$-обобщенно гладкими операторами. За счёт модификации критерия выхода из итерации этот подход позволяет улучшить оценку качества выдаваемого методом решения в части влияния на него значения параметра $\delta>0$.

\begin{algorithm}[htb]
\caption{Вариант адаптивного метода для вариационных неравенств с $\delta$-обобщённо гладкими и относительно сильно монотонными операторами.}
\label{alg3}
\begin{algorithmic}[1]
\REQUIRE $x_0 \in Q,\;\delta >0,\;L_0 > 0,\;\mu >0,\;d(\cdot),\;V(\cdot, \cdot).$
\STATE $z_0 = \argmin_{u\in Q} d(u).$
\FOR{$k \geqslant  0$}
\STATE Найти наименьшее целое $i_k \geq 0$ такое, что
\begin{equation}\label{eq_alg_3}
\begin{gathered}
\langle g(z_k) - g(w_k), z_{k+1} - w_k \rangle \leq
\\
\leq L_{k+1} \left (V(w_k, z_k) + V(z_{k+1}, w_k) \right ) + L_{k+1}\delta,
\end{gathered}
\end{equation}
где $L_{k+1} = 2^{i_k - 1} L_k$, и
\begin{equation}\label{i1alg3}
    w_k = \argmin\limits_{y \in Q} \left \{ \left \langle \frac{1}{L_{k+1}} g(z_k), y \right \rangle  + V(y, z_k)\right \},
\end{equation}
\begin{equation}\label{i2alg3}
\begin{gathered}
    z_{k+1} = \argmin\limits_{z \in Q} \bigg \{ \left \langle \frac{1}{L_{k+1}} g(w_k), z \right \rangle + V(z, z_k) \\
    + \frac{\mu}{L_{k+1}} V(z, w_k) \bigg \}.
    \end{gathered}
\end{equation}
\ENDFOR
\ENSURE $z_k$.
\end{algorithmic}
\end{algorithm}

Для алгоритма \ref{alg3} докажем следующий результат, анонсированный в \cite{Titov}.
\begin{theorem}\label{th5}
Пусть $g$ $\mu$-относительно сильно монотонный оператор и $z_*$~--- точное решение вариационного неравенства \eqref{eq:1}. Тогда для алгоритма \ref{alg3} справедливы следующие неравенства:
\begin{equation}\label{estim_alg5}
\begin{gathered}
V(z_*, z_{k+1}) \leq \prod_{i=1}^{k+1} \left ( 1 + \frac{\mu}{L_{i}} \right )^{-1} V(z_*, z_0) +
\\
+\delta \left (1 + \sum_{j=1}^k \prod_{i=j+1}^{k+1} \left ( 1 + \frac{\mu}{L_i} \right )^{-1} \right ).
\end{gathered}
\end{equation}
Если, кроме этого, $L_0 \leq 2L$, то имеет место неравенство
\begin{equation}\label{ineq1}
\begin{gathered}
V(z_*, z_{k+1}) \leq \left ( 1 + \frac{\mu}{2L} \right )^{-(k+1)} V(z_*, z_0) + \\
+ \left ( 1 + \frac{2L}{\mu} \right ) \delta.
\end{gathered}
\end{equation}
\end{theorem}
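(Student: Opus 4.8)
The plan is to follow the proof of Theorem~\ref{th4} almost verbatim, tracking the single change caused by the enlarged residual $L_{k+1}\delta$ in the exit test~\eqref{eq_alg_3}. First I would write down the two prox-type inequalities coming from the optimality conditions of the auxiliary minimization problems~\eqref{i1alg3} and~\eqref{i2alg3} — the exact analogues of~\eqref{i1} and~\eqref{i2} — add them, substitute~\eqref{eq_alg_3} (which now contributes $L_{k+1}\delta$ instead of $\delta$), and apply relative $\mu$-strong monotonicity~\eqref{eq:3} to $\langle g(w_k), w_k - z_*\rangle$. This should yield the one-step estimate
\[
V(z_*,z_k) \geq \Bigl(1+\tfrac{\mu}{L_{k+1}}\Bigr)V(z_*,z_{k+1}) - \delta ,
\]
equivalently
\[
V(z_*,z_{k+1}) \leq \Bigl(1+\tfrac{\mu}{L_{k+1}}\Bigr)^{-1}\bigl(V(z_*,z_k) + \delta\bigr) \leq \Bigl(1+\tfrac{\mu}{L_{k+1}}\Bigr)^{-1}V(z_*,z_k) + \delta .
\]
The point of the modified criterion is precisely this: compared with the recursion in the proof of Theorem~\ref{th4}, the factor $1/L_{k+1}$ multiplying $\delta$ is cancelled by the extra $L_{k+1}$ in the slack, so the per-step error is a clean $\delta$ no matter how small the adaptively chosen $L_{k+1}$ happens to be.

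Next I would unroll this recursion down to $z_0$. Writing $a_i := (1+\mu/L_i)^{-1}\in(0,1]$, iteration gives
\[
V(z_*,z_{k+1}) \leq \Bigl(\prod_{i=1}^{k+1}a_i\Bigr)V(z_*,z_0) + \delta\Bigl(1 + \sum_{j=1}^{k}\prod_{i=j+1}^{k+1}a_i\Bigr),
\]
which is exactly~\eqref{estim_alg5}. For the second assertion I would reason as in Remark~\ref{rem1}: applying the $\delta$-generalized smoothness~\eqref{eq3} with $y=z_k$, $z=w_k$, $x=z_{k+1}$ shows that the test~\eqref{eq_alg_3} is satisfied as soon as $L_{k+1}\geq L$, so the doubling rule started from $L_0\leq 2L$ never overshoots and $L_i\leq 2L$ for all $i$. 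Then $a_i\leq (1+\mu/(2L))^{-1}$ bounds the product by $(1+\mu/(2L))^{-(k+1)}$, while the error factor is dominated by a geometric series, $1 + \sum_{j=1}^{k}\prod_{i=j+1}^{k+1}a_i \leq \sum_{m=0}^{\infty}(1+\mu/(2L))^{-m} = 1 + \tfrac{2L}{\mu}$, and~\eqref{ineq1} follows.

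The recursion itself is routine — it is a transcription of the Theorem~\ref{th4} computation — so the only places that need attention are the bookkeeping around the $\delta$ term (checking that the cancellation of $1/L_{k+1}$ and the bound $(1+\mu/L_{k+1})^{-1}\delta \leq \delta$ are applied correctly) and the claim $L_i\leq 2L$: one must verify that~\eqref{eq3} genuinely triggers~\eqref{eq_alg_3} at $L_{k+1}\approx L$, which is immediate when $L\geq 1$ (so that $\delta\leq L_{k+1}\delta$) and otherwise costs only a constant factor in the final bound. I do not anticipate any difficulty beyond what already appears in the proof of Theorem~\ref{th4}.
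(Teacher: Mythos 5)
Your proposal follows the paper's proof essentially verbatim: the same two prox-inequalities from \eqref{i1alg3}--\eqref{i2alg3}, the same use of the exit test \eqref{eq_alg_3} together with relative strong monotonicity to get the one-step recursion $V(z_*,z_{k+1}) \le \left(1+\frac{\mu}{L_{k+1}}\right)^{-1}V(z_*,z_k)+\delta$, the same unrolling to \eqref{estim_alg5}, and the same geometric-series bound under $L_i\le 2L$ giving \eqref{ineq1}. Your side remark that for $L<1$ the smoothness condition \eqref{eq3} need not trigger \eqref{eq_alg_3} at $L_{k+1}\in[L,1)$ (so one only gets $L_i\le 2\max(L,1)$ and a constant change in the final bound) is a legitimate subtlety which the paper passes over silently, but it does not alter the approach.
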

\begin{proof}
Для $w_k$ и $z_{k+1}$ при всяких $ k \geq 0$ из \eqref{i1alg3} и \eqref{i2alg3} получаем, что
\begin{equation}\label{ooo}
\begin{gathered}
\frac{1}{L_{k+1}} \langle g(z_k), w_k - z_{k+1}\rangle \leq V(z_{k+1}, z_k) - \\
- V(z_{k+1}, w_k)- V(w_k, z_k),
\end{gathered}
\end{equation}
и
\begin{equation}\label{ppp}
\begin{gathered}
\frac{1}{L_{k+1}} \langle g(w_k), z_{k+1} - z_* \rangle \leq  V(z_*, z_k) - V(z_*, z_{k+1}) \\
- V(z_{k+1}, z_k)  + \frac{\mu}{L_{k+1}} (V(z_*, w_k) - V(z_*, z_{k+1})).
\end{gathered}
\end{equation}

После сложения \eqref{ooo} и \eqref{ppp} получим
\begin{equation*}
\begin{gathered}
\frac{1}{L_{k+1}}  \Big(\langle g(z_k), w_k - z_{k+1}\rangle +  \langle g(w_k), z_{k+1} - z_* \rangle \Big) \leq
\\
- ( V(w_k, z_k)+  V(z_{k+1}, w_k) ) + V(z_*, z_k) -
\\
- V(z_*, z_{k+1}) + \frac{\mu}{L_{k+1}}(V(z_*, w_k) - V(z_*, z_{k+1})).
\end{gathered}
\end{equation*}

Используя неравенство \eqref{eq_alg_3}, получим
$$
    V(z_*, z_{k+1}) \leq \left ( 1 + \frac{\mu}{L_{k+1}} \right )^{-1} V(z_*, z_{k}) + \frac{L_{k+1 }\delta}{L_{k+1} + \mu},
$$
или
$$
    V(z_*, z_{k+1}) \leq \left ( 1 + \frac{\mu}{L_{k+1}} \right )^{-1} V(z_*, z_{k}) + \delta,
$$
откуда
$$ V(z_*, z_{k+1}) \leq \prod_{i=1}^{k+1} \left ( 1 + \frac{\mu}{L_{i}} \right )^{-1} V(z_*, z_0) \, + $$

$$ + \, \delta \left (1 + \sum_{j=1}^k \prod_{i=j+1}^{k+1} \left ( 1 + \frac{\mu}{L_i} \right )^{-1} \right ).$$

Далее, если $L_0 \leq 2L$, то для всякого натурального $i$ верно $L_i \leq 2 L$, получаем
$$ V(z_*, z_{k+1}) \leq \left ( 1 + \frac{\mu}{2L} \right )^{-(k+1)} V(z_*, z_0) \, +$$
$$+ \, \left ( 1 + \frac{2L}{\mu} \right ) \delta,$$
что и требовалось.
\end{proof}

\He{ЧИСЛЕННЫЕ ЭКСПЕРИМЕНТЫ}\label{sec:exp}

В данном разделе для иллюстрации работоспособности предложенных выше алгоритмов приведем результаты вычислительных экспериментов по применению алгоритма~\ref{alg1} к различным задачам.

Программы для проведения экспериментов были реализованы на Python 3.4, все эксперименты проводились на компьютере с процессором Intel(R) Core(TM) i7-8550U CPU (1.80GHz, 4 ядра, 8 потоков). Оперативная память компьютера составляла 8 ГБ.

1) Первой была рассмотрена задача решения вариационного неравенства, связанная с параллелепипедно-симплексными играми и описанная в примере~\ref{ex:box}. Фигурирующие в постановке задачи данные были сгенерированы случайно следующим образом: $A = B B^\top$, $B_{ij}$ есть независимые случайные величины с равномерным распределением вероятностей на $[0, 0.001]$, $b,\;c \in \mathbb{R}^n$~--- с равномерным распределением вероятностей на $[0, 1]$.

К задаче размерности $n = 200$ с парами коэффициентов $\mu_z = \mu_y = 10^{-2}$ и $\mu_z = 10^{-2}$, $\mu_y = 10^{-6}$ были применены алгоритм~\ref{alg1}, соответствующий неадаптивный вариант \cite{Sid} и обычный экстраградиентный методы. Начальная точка выбиралась случайным образом, была одинаковой для всех запусков и имела компоненты с равномерным распределением вероятностей на $[0, 1]$.

\begin{figure}[ht!]
    \centering
    \includegraphics[width=7.5cm]{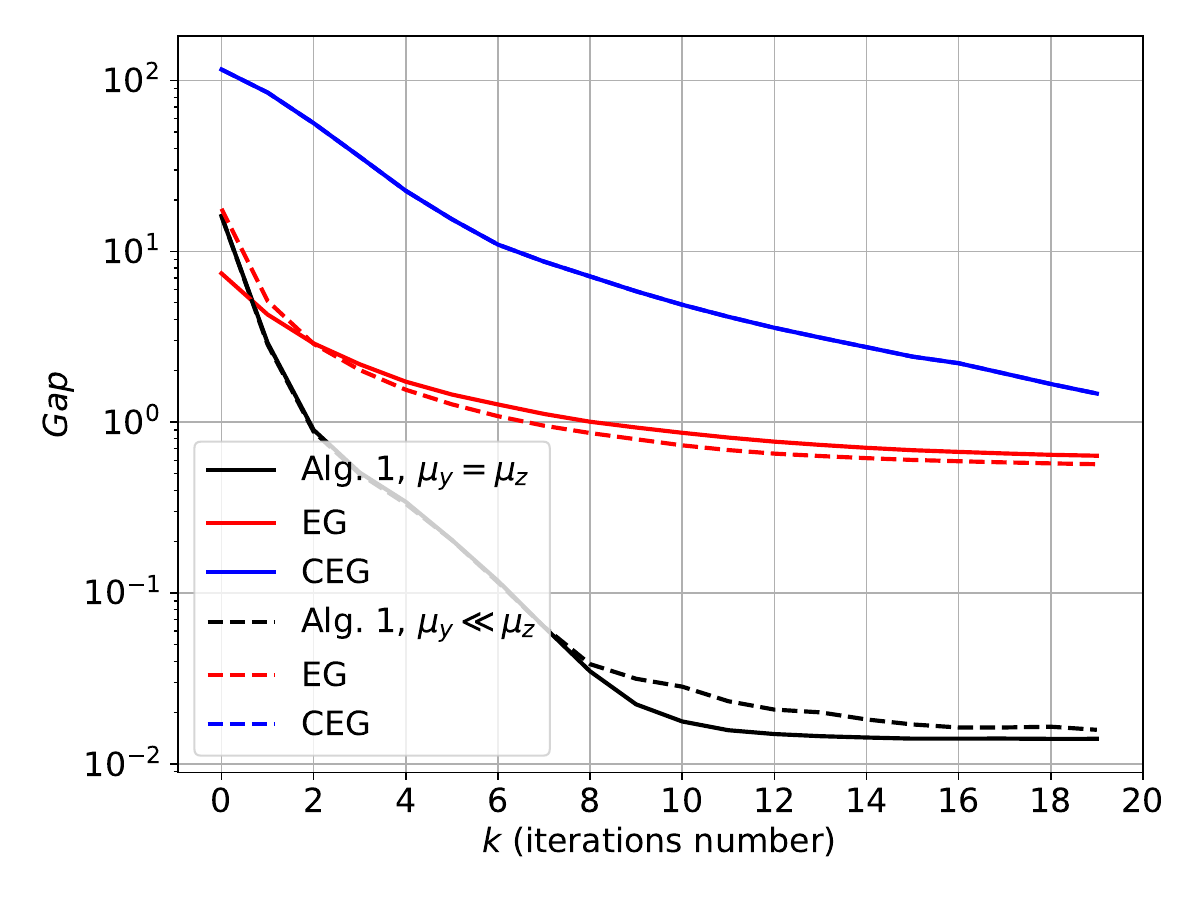}
    \caption{Результаты эксперимента по сравнению алгоритма~\ref{alg1} (Alg. 1) с соответствующим неадаптивным (EG) и классическим экстраградиентным (CEG) методами для задачи \eqref{eq:1}, \eqref{eq:box-simplex} с $\mu_y = \mu_z$ (сплошная линия) и $\mu_y \ll \mu_z$ (пунктир).}
    \label{Fig_box_simplex}
\end{figure}

Результаты сравнения алгоритмов показаны на рис.~\ref{Fig_box_simplex}. В качестве метрики качества для построения кривых сходимости используется зазор, определённый как
\begin{equation*}
    \operatorname{Gap}(y, z) := \max_{z' \in \triangle_n} f(y, z') - \min_{y' \in [-1, 1]^n} f(y', z)
\end{equation*}
и вычисляемый приблизительно с использованием реализации солвера SLSQP из библиотеки scipy. Из рис.~\ref{Fig_box_simplex} видно, 
что предлагаемый адаптивный алгоритм сходится заметно быстрее как своего неадаптивного варианта алгоритма \ref{alg1} с постоянным шагом, так и классического экстраградиентного метода.

2) Далее, рассмотрена распределённая централизованная задача гребневой регрессии \cite{acceleration_similarity}, имеющая структуру \eqref{EmpirProbl} для
\begin{equation}\label{eq:ridge}
    f_j(x) := \frac{1}{2s} \|A_j x - b_j\|_2^2 + \lambda \|x\|_2^2,\; j = 1, \ldots, m,
\end{equation}
для некоторого $\lambda > 0$. Таким образом, каждый агент $j$ владеет в качестве локальных данных матрицей $A_j \in \mathbb{R}^{s \times n}$ и вектором $b_j \in \mathbb{R}^s$. Данные были сгенерированы случайно в соответствии со следующими законами: $A_j \in \mathbb{R}^{s\times n}$ для каждого $j = 1, \ldots, m$ имеет компоненты с экспоненциальным распределением вероятностей с коэффициентом масштаба $1$ (в данном случае $\gamma > 1$) и со стандартным распределением Коши
(тогда параметр схожести слагаемых $\gamma = 10^{-2}$ в \eqref{eq:ridge}), а элементы $b_j \in \mathbb{R}^s$~--- с равномерным распределением вероятностей на $[0,1]$. В качестве множества $Q$ выберем евклидов $\ell_2$-шар в $\mathbb{R}^n$ с центром в $0 \in \mathbb{R}^n$ и радиусом $1$. Вслед за \cite{Hendr} выберем в качестве прокс-функции
\begin{equation*}
d(x) = f_1(x) + \frac{\gamma}{2}\|x\|_2^2,
\end{equation*}
где $\gamma$ оценивает сверху спектральную норму разности $\nabla^2 f_1(x) - \nabla^2 F(x)$ (её наибольшее собственное значение) \cite{Hendr}.

Алгоритм~\ref{alg1}, так же как соответствующий ему неадаптивный вариант с постоянным шагом и метод зеркального спуска \cite{Hendr}, который имеет следующий вид:
\begin{equation}\label{eq_MD_Hen}
x_{k+1} =  \argmin\limits_{x\in Q} \left \{ \left \langle  \nabla F(x_k), x \right \rangle  + V(x, x_k)\right \},
\end{equation}
в качестве начальной точки имел $x_0 = (1/\sqrt{n}, \dots, 1/\sqrt{n})^\top \in \mathbb{R}^n$, а коэффициенты в задаче были зафиксированы в следующих значениях: $n = 50,\;s = m = 100$.
\begin{figure}[ht!]
    \centering
    \includegraphics[width=7.5cm]{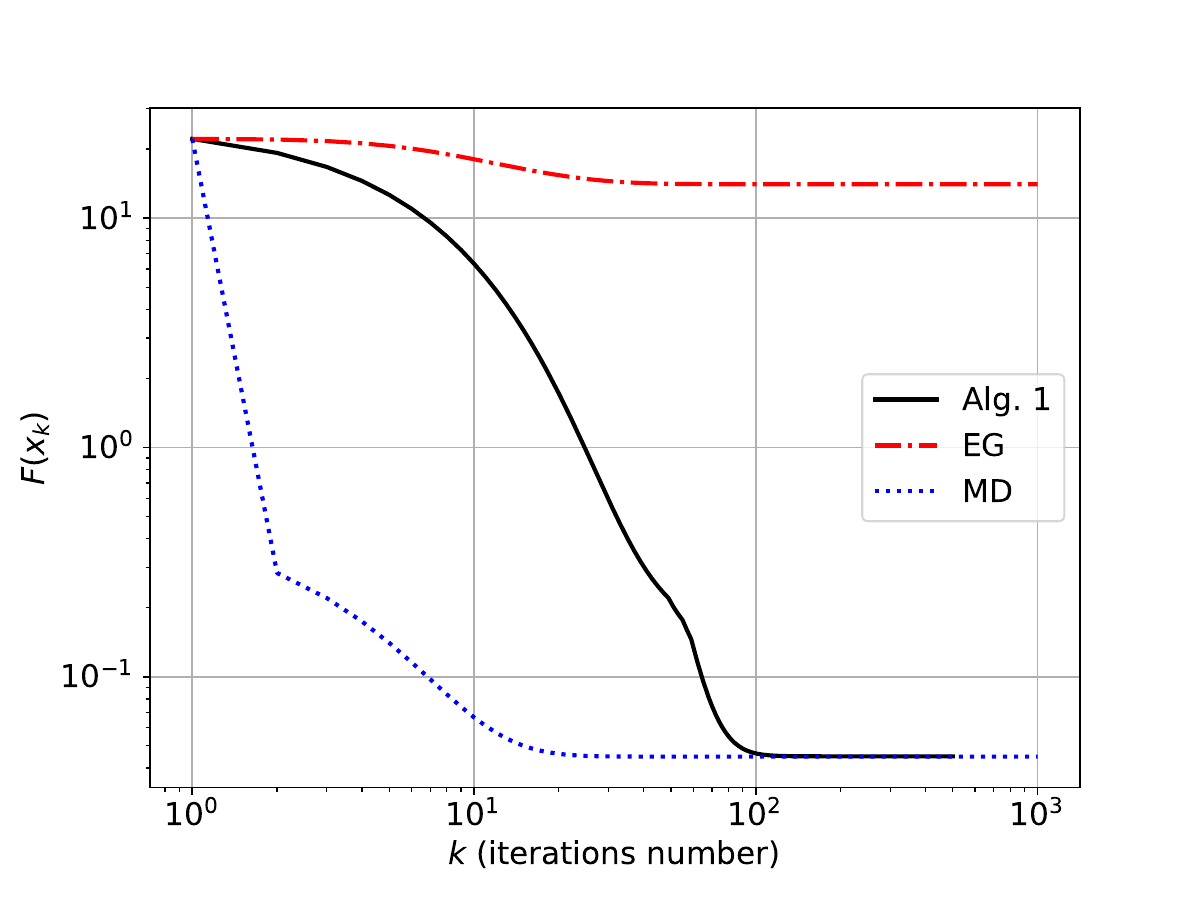}
    \includegraphics[width=7.5cm]{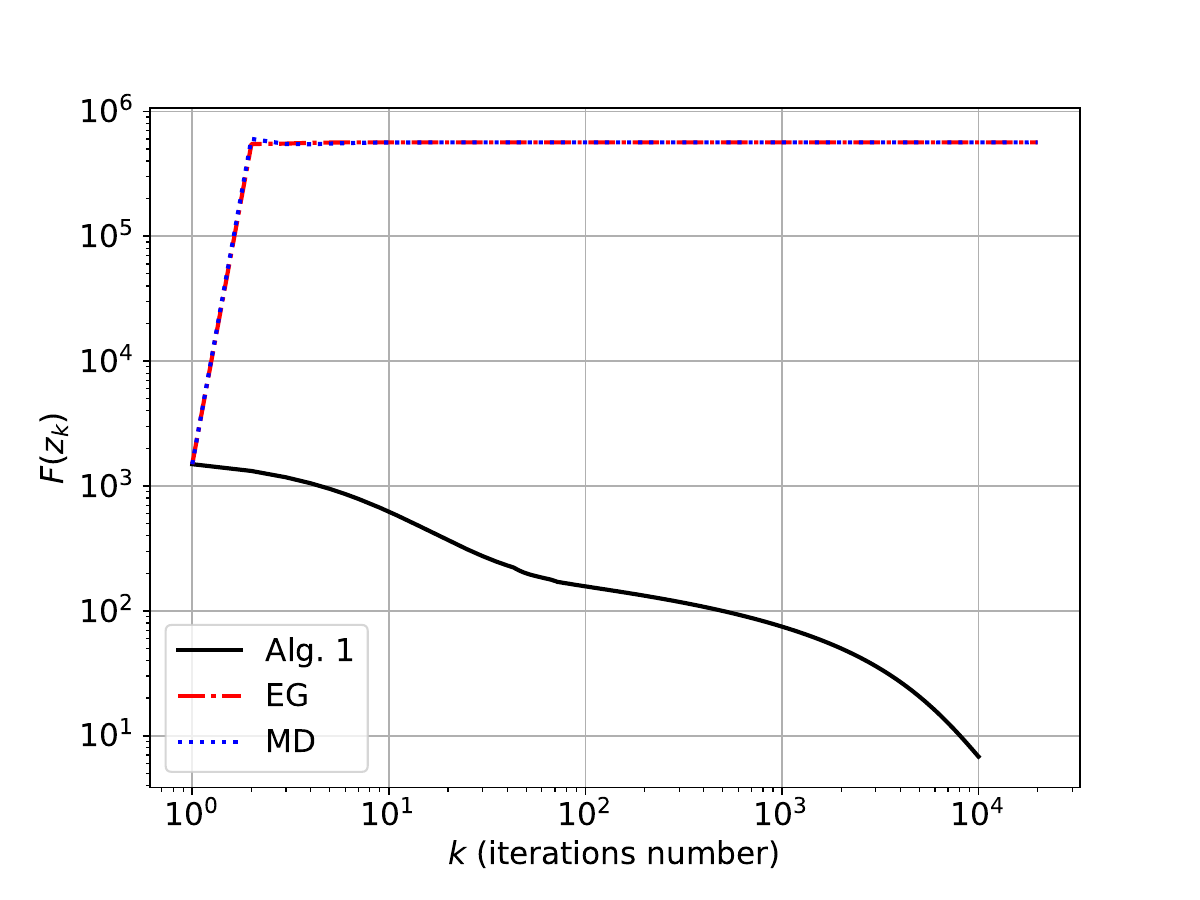}
    \caption{Результаты эксперимента по сравнению алгоритма~\ref{alg1} (Alg. 1) с соответствующим неадаптивным методом (EG) и методом зеркального спуска (MD) для задачи \eqref{EmpirProbl}, \eqref{eq:ridge} с $\lambda  = 10^{-1},\;\gamma > 1$ (сверху) и $\lambda  = 10^{-3},\;\gamma = 10^{-2} $ (снизу).}
    \label{Alg1_EG_MD_prob1}
\end{figure}

Результаты сравнения алгоритмов представлены на рис.~\ref{Alg1_EG_MD_prob1}. Построенные кривые сходимости отображают изменение значения целевой функции $F(x) = \frac{1}{m} \sum_{i=1}^m f_i(x)$ с увеличением числа итераций алгоритма~\ref{alg1}. Из рис.~\ref{Alg1_EG_MD_prob1} могут быть сделаны следующие наблюдения. Во-первых, среди подходов, основанных на сведении задачи распределённой оптимизации к задаче решения вариационного неравенства, адаптивный алгоритм \ref{alg1} сходится за существенно меньшее число итераций, чем его неадаптивный вариант. Во-вторых, оба соответствующих алгоритма уступают в эффективности подходу, решающему непосредственно исходную задачу оптимизации: можно видеть, что метод зеркального спуска наиболее эффективен среди подходов, применённых к рассмотренной задаче. Заметим также, что в наших экспериментах выбор параметра $\gamma$ меньшим, чем то предписано теоретической оценкой схожести функционалов, приводил к хорошей сходимости именно исследуемого нами алгоритма \ref{alg1} для вариационных неравенств. При этом оказалось, что наблюдалась расходимость соответствующего ему неадаптивного метода и метода зеркального спуска \eqref{eq_MD_Hen}. Представляется, что отмеченные факты интересно исследовать дополнительно. На данный момент основной задачей была демонстрация преимуществ адаптивного подбора шага в алгоритме \ref{alg1} по сравнению с известным ранее его неадаптивным вариантом \cite{Sid}.

\He{ЗАКЛЮЧЕНИЕ}

В работе исследован адаптивный метод (алгоритм~\ref{alg1}) решения вариационных неравенств с относительно гладкими и относительно сильно монотонными операторами. Получена оценка скорости сходимости рассматриваемого метода с использованием адаптивно подбираемых параметров. В частности, это даёт возможность применять подход и полученные результаты к достаточно популярному классу выпукло-вогнутых седловых задач вида ($Q_x$ и $Q_y$~--- выпуклые замкнутые множества)
$$\min_{x\in Q_x} \max_{y\in Q_y} f(x,y)$$
с соответствующими предположениями о гладкости функционала $f$:
$$\| \nabla_x f(x_1, y) - \nabla_x f(x_2, y) \|_2 \leq L_{xx} \|x_1 - x_2\|_2,$$
$$\| \nabla_x f(x, y_1) - \nabla_x f(x, y_2) \|_2 \leq L_{xy} \|y_1 - y_2\|_2,$$
$$\| \nabla_y f(x_1, y) - \nabla_y f(x_2, y) \|_2 \leq L_{xy} \|x_1 - x_2\|_2,$$
$$\| \nabla_y f(x, y_1) - \nabla_y f(x, y_2) \|_2 \leq L_{yy} \|y_1 -y_2\|_2,$$
$\forall x, x_1, x_2 \in Q_x$, $y, y_1, y_2 \in Q_y$ и норма $\|\cdot\|_2$ евклидова. Алгоритм~\ref{alg1} позволяет реализовать адаптивную настройку на все параметры гладкости $L_{xx}, L_{xy}, L_{yy} >0 $.

При этом показано (замечание~\ref{rem1}), что дополнительные процедуры адаптивного подбора параметров в исследованных выше алгоритмах не приводят к резкому росту сложности метода. Рассмотрено приложение к адаптивному методу для задач централизованной распределённой оптимизации (задача минимизации эмпирического риска из примера~\ref{min_risk}) со схожестью слагаемых.

Также исследована более общая ситуация, когда оператор вариационного неравенства удовлетворяет условию $\delta$-обобщённой гладкости \cite{Stonyakin} (в статье \cite{Stonyakin} рассматривался случай класса только монотонных операторов). В этой связи рассмотрен алгоритм~\ref{alg2}, получен соответствующий результат о сходимости и описано влияние параметра $\delta$ на качество выдаваемого решения. Проведены численные эксперименты по реализации алгоритма~\ref{alg1} для решения следующих задач: распределенная централизованная задача гребневой регрессии со схожестью функций-слагаемых и задача решения вариационного неравенства, связанного с параллелепипедно-симплексными играми. Представляется, что выполненные эксперименты продемонстрировали достаточно хорошую эффективность алгоритма \ref{alg1}.

\He{ФИНАНСИРОВАНИЕ} Работа над разделом \ref{sec:2} выполнена при частичной поддержке программы стратегического академического лидерства “Приоритет-2030” (соглашение №~075-02-2021-1316 от 30.09.2021). Работа над разделом \ref{sec:exp} выполнена при финансовой поддержке гранта Президента Российской Федерации для государственной поддержки ведущих научных школ НШ775.2022.1.1.

\label{lastpage}


\begin{thebibliography}{99}

\small

\bibitem{Bauschke}
{\em Bauschke,\,H.\,H., Bolte,\,J., Teboulle,\,M.} A descent lemma beyond Lipschitz gradient continuity: first-order methods revisited and applications. Mathematics of Operations Research, \textbf{42}(2), 330--348 (2017).

\bibitem{Sid}
{\em Cohen,\,M.\,B., Sidford,\,A., Tian,\,K.}  Relative Lipschitzness in Extragradient Methods and a Direct Recipe for Acceleration. arXiv preprint \url{https://arxiv.org/pdf/2011.06572.pdf} (2020).


\bibitem{Hendr}
{\em Hendrikx,\,H., Xiao,\,L., Bubeck,\,S., Bach,\,F., Massoulie,\,L.}  Statistically preconditioned accelerated gradient method for distributed optimization. In International conference on machine learning, 4203--4227. PMLR, (2020).



\bibitem{Sidford}
{\em Jin,\,Y., Sidford,\,A., Tian,\,K.} Sharper Rates for Separable Minimax and Finite Sum Optimization via Primal-Dual Extragradient Methods. In Conference on Learning Theory, 4362--4415. PMLR, (2022).

\bibitem{Lu_Nesterov_2018}
{\em Lu,\,H., Freund,\,R.\,M., Nesterov,\,Y.} Relatively smooth convex optimization by first-order methods, and applications. SIAM Journal on Optimization, \textbf{28}(1), 333--354 (2018).

\bibitem{Stonyakin}
{\em Stonyakin,\,F., Tyurin,\,A., Gasnikov,\,A., Dvurechensky,\,P., Agafonov,\,A., Dvinskikh,\,D., Alkousa,\,M., Pasechnyuk,\,D., Artamonov,\,S., Piskunova, V.} Inexact Relative Smoothness and Strong Convexity for Optimization and Variational Inequalities by Inexact Model.// Optim. Methods and Software, Vol. 36, No. 6, 1155--1201 (2021).


\bibitem{Titov}
{ \em Titov,\,A.\,A., Ablaev,\,S.\,S., Stonyakin,\,F.\,S., Alkousa,\,M.\,S., Gasnikov,\,A.} Some Adaptive First-Order Methods for Variational Inequalities with Relatively Strongly Monotone Operators and Generalized Smoothness. In: Olenev, N., Evtushenko, Y., Jaćimović, M., Khachay, M., Malkova, V., Pospelov, I. (eds) Optimization and Applications. OPTIMA 2022. Lecture Notes in Computer Science, vol 13781. Springer, Cham.  (2022). 

\bibitem{acceleration_similarity}
{ \em Tian,\,Y., Scutari,\,G., Cao,\,T., Gasnikov,\,A.} Acceleration in Distributed Optimization under Similarity. Proceedings of The 25th International Conference on Artificial Intelligence and Statistics, PMLR 151:5721--5756, (2022).

\end{thebibliography}
\end{document}